\def\a{\alpha}
\def\t{\theta}
\def\e{\eta}
\def\sg{\sigma}
\def\V{\Vert}
\def\w{\widetilde}
\def\d{\delta}
\def\s{\sum}
\def\n{\nabla}
\def\dif{{\rm d}}
\def\R{{\bf R}}
\newtheorem{thrm}{Theorem}[section]
\newtheorem{lem}[thrm]{Lemma}
\newtheorem{cor}[thrm]{Corollary}
\theoremstyle{definition}
\numberwithin{equation}{section}
\author{L.M. Fern\'andez}
\address{Departmento de Geometr\'{\i}a y Topolog\'{\i}a, Facultad de Matem%
\'{a}ticas\\
Universidad de Sevilla, Apartado de Correos 1160, 41080 Sevilla, Spain } \email{lmfer@us.es}
\author{A.M. Fuentes}
\address{Departmento de Geometr\'{\i}a y Topolog\'{\i}a, Facultad de Matem%
\'{a}ticas\\
Universidad de Sevilla, Apartado de Correos 1160, 41080 Sevilla, Spain }
\thanks{The first author is partially supported by the PAI group FQM-327 (Junta de Andaluc\'{\i}a, Spain, 2011) and by the MEC project MTM 2011-22621
(MEC, Spain, 2011).}
\keywords{Generalized $S$-space-form, intrinsic invariant, slant submanifold.} \subjclass{Primary 53C25, Secondary 53C40}
\begin{document}

\title[Submanifolds in generalized $S$-space-forms]{Some relationships between intrinsic and extrinsic invariants of submanifolds in generalized $S$-space-forms}

\begin{abstract}
We establish some inequalities of Chen's type between certain intrinsic invariants (involving sectional, Ricci and scalar curvatures) and the squared
mean curvature of submanifolds tangent to the structure vector fields of a generalized $S$-space-form and we discuss the equality cases of them. We
apply the obtained results to slant submanifolds.
\end{abstract}
\maketitle

\section{Introduction} \label{sect1}

Intrinsic and extrinsic invariants are very powerful tools to study submanifolds of Riemannian manifolds. To establish relationships between
intrinsic and extrinsic invariants of a submanifold is one of the most fundamental problems in submanifolds theory. In this context, B.-Y. Chen
\cite{C7,C6,C5} proved some basic inequalities for submanifolds of a real space-form. Corresponding inequalities have been obtained for different
kinds of submanifolds (invariant, anti-invariant, slant) in ambient manifolds endowed with different kinds of structures (mainly, real, complex and
Sasakian space-forms).

Moreover, it is well known that the sectional curvatures of a Riemannian manifold determine the curvature tensor field completely. So, if $(M,g)$ is
a connected Riemannian manifold with dimension greater than 2 and its curvature tensor field $R$ has the pointwise expression
$$R(X,Y)Z=\lambda\left\{g(X,Z)Y-g(Y,Z)X\right\},$$
where $\lambda$ is a differentiable function on $M$, then $M$ is a space of constant sectional curvature, that is, a real-space-form and $\lambda$ is
a constant function.

Further, when the manifold is equipped with some additional structure, it is sometimes possible to obtain conclusions from a special form of the
curvature tensor field for this structure too. Thus, for almost-Hermitian manifolds, F. Tricerri and L. Vanhecke \cite{TV} introduced {\it
generalized complex-space-forms} and, for almost contact metric manifolds, P.Alegre, D.E. Blair and A. Carriazo \cite{ABC} defined and studied {\it
generalized Sasakian-space-forms}, which generalize complex and Sasakian space-forms, respectively.

More in general, K. Yano \cite{Y} introduced the notion of $f$-structure on a ${2m+s}$-dimensional manifold as a tensor field $f$ of type (1,1) and
rank $2m$ satisfying $f^3+f=0$. Almost complex ($s=0$) and almost contact ($s=1$) structures are well-known examples of $f$-structures. The case
$s=2$ appeared in the study of hypersurfaces in almost contact manifolds \cite{BL,GY}. A Riemannian manifold endowed with an $f$-structure compatible
with the Riemannian metric is called a metric $f$-manifold. For $s=0$ we have almost Hermitian manifolds and for $s=1$, metric almost contact
manifolds.  In this context, D.E. Blair \cite{B} defined $K$-manifolds (and particular cases of $S$-manifolds and $C$-manifolds) as the analogue of
Kaehlerian manifolds in the almost complex geometry and of quasi-Sasakian manifolds in the almost contact geometry and he showed that the curvature
of either $S$-manifolds or $C$-manifolds is completely determined by their $f$-sectional curvatures. Later, M. Kobayashi and S. Tsuchiya \cite{KT}
got expressions for the curvature tensor field of $S$-manifolds and $C$-manifolds when their $f$-sectional curvature is constant depending on such a
constant. Such spaces are called $S$-space-forms and $C$-space-forms and they generalize complex and Sasakian space-forms and cosymplectic
space-forms, respectively.

For metric $f$-manifolds, the authors and A. Carriazo \cite{CFFu} and, independently, M. Falcitelli and A.M. Pastore \cite{FP}, have introduced a
notion of {\it generalized $S$-space-form} in such a way that $S$-space-forms and $C$-space-forms become particular cases of generalized
$S$-space-forms (see \cite{CFFu}). The first ones limited their research to the case $s=2$, even though their definition is easily adaptable to any
$s>2$, giving some non-trivial examples \cite{CF,CFFu}. Consequently, generalized $S$-space-forms make a more general framework to study the geometry
of certain metric $f$-manifolds.

For these reasons and since some inequalities of Chen's type, involving sectional, scalar and Ricci curvatures and squared mean curvature, have been
proved for different kinds of submanifolds in $S$-space-forms \cite{CFH,FH,KDT1,KDT}, the purpose of this paper is to establish them for generalized
$S$-space-forms with two structure vector fields. To this end, after a preliminaries section containing basic notions of Riemannian submanifolds
theory, in Section \ref{fman} we present some definitions and formulas concerning metric $f$-manifolds for later use. Finally, we prove some
inequalities for submanifods of a generalized $S$-space-form, tangent to the structure vector fields, relating either the Ricci curvature (Section
\ref{riccisect}) or certain intrinsic invariant, defined from sectional and scalar curvatures (Section \ref{scalarsect}), to the squared mean
curvature, studying the equality cases and applying the obtained theorems to slant submanifolds. We should like to point out here that all the
results of the paper improve those ones proved for $S$-space-forms in \cite{CFH,FH}

\section{Preliminaries.}\label{Sec2}
Let $M$ be a Riemannian manifold isometrically immersed in a Riemannian manifold $\w M$. Let $g$ denote the metric tensor of $\w M$ as well as the
induced metric tensor on $M$. If $\n$ and $\w\n$ denote the Riemannian connections of $M$ and $\w M$, respectively, the Gauss-Weingarten formulas are
given by
\begin{equation}\label{gwf}
\w\n_XY=\n_XY+\sg(X,Y),\mbox{ }\w\n_XV=-A_VX+D_XV,
\end{equation}
for any vector fields $X,Y$ (resp., $V$) tangent (resp., normal) to $M$, where $D$ is the normal connection, $\sg$ is the second fundamental form of
the immersion and $A_V$ is the Weingarten endomorphism associated with $V$. Then, $A_V$ and $\sg$ are related by $g(A_VX,Y)=g(\sg(X,Y),V)$.

The curvature tensor fields $R$ and $\w R$ of $\n$ and $\w\n$, respectively, satisfies the Gauss equation
\begin{equation}\label{ge}
\begin{split}
 \w R(X,Y,Z,W)=&R(X,Y,Z,W)\\
 &+g(\sg(X,Z),\sg(Y,W))-g(\sg(X,W),\sg(Y,Z)),
\end{split}
\end{equation}
for any $X,Y,Z,W$ tangent to $M$.

The mean curvature vector $H$ is defined by
$$H=\frac{1}{m}{\rm trace}\mbox{ }\sg=\frac{1}{m}\s_{i=1}^m\sg(e_i,e_i),$$
where dim$M=m$ and $\{e_1,\dots ,e_m \}$ is a local orthonormal basis of tangent vector fields to $M$. In this context, $M$ is said to be {\it
minimal} if $H$ vanishes identically or, equivalently, if trace$A_V=0$, for any vector field $V$ normal to $M$. Moreover, $M$ is said to be {\it
totally geodesic} in $\w M$ if $\sg\equiv 0$ and {\it totally umbilical} if $\sg(X,Y)= g(X,Y)H$, for any $X,Y$ tangent to $M$. Moreover, the {\it
relative null space} of $M$ is defined by:
$$\mathcal{N}=\{X\mbox{ tangent to }M:\;\sg(X,Y)=0,\mbox{ for all }Y\mbox{ tangent to }M\}.$$

\section{Submanifolds of metric $f$-manifolds.}\label{fman}

A $(2m+s)$-dimensional Riemannian manifold $(\w M,g)$ with an $f$-structure $f$ (that is, a tensor field $f$ of type (1,1) and rank $2m$ satisfying
$f^3+f=0$ \cite{Y}) is said to be a {\it metric $f$-manifold} if, moreover, there exist $s$ global vector fields $\xi_1,\dots ,\xi_s$ on $\w M$
(called {\it structure vector fields}) such that, if $\e_1,\dots ,\e_s$ are their dual 1-forms, then
\begin{equation}\label{defg}
f\xi_\a=0;\mbox{ }\e_\a\circ f=0;\mbox{ }f^2=-I+\s_{\a=1}^s\e_\a\otimes\xi_\a;
\end{equation}
$$g(X,Y)=g(fX,fY)+\s_{\a=1}^s\e_\a(X)\e_\a(Y),$$
for any $X,Y$ tangent to $\w M$. Let $F$ be the 2-form on $\w M$ defined by $F(X,Y)=g(X,fY)$. Since $f$ is of rank $2m$, then
$\eta_1\wedge\cdots\wedge\eta_s\wedge F^m\neq 0$ and, particularly, $\w M$ is orientable. The $f$-structure $f$ is said to be {\it normal} if
$$[f,f]+2\sum_{\a=1}^s\xi_\a\otimes d\eta_\a=0,$$
where $[f,f]$ denotes the Nijenhuis tensor of $f$.

A metric $f$-manifold is said to be a $K$-manifold \cite{B} if it is normal and $\dif F=0$. In a $K$-manifold $\w M$, the structure vector fields are
Killing vector fields \cite{B}. Furthermore, a $K$-manifold is called an $S$-manifold if $F=\dif\e_\a$ and a $C$-manifold if $\dif\e_\a=0$, for any
$\a=1,\dots,s$. Note that, for $s=0$, a $K$-manifold is a Kaehlerian manifold and, for $s=1$, a $K$-manifold is a quasi-Sasakian manifold, an
$S$-manifold is a Sasakian manifold and a $C$-manifold is a cosymplectic manifold. When $s\geq 2$, non-trivial examples can be found in \cite{B}.
Moreover, a $K$-manifold $\w M$ is an $C$-manifold if and only if
\begin{equation}\label{nablaX}
\w\n_X\xi_\a=0,\mbox{ }\a=1,\dots,s,
\end{equation}
for any tangent vector field $X$.

A plane section $\pi$ on a metric $f$-manifold $\w M$ is said to be an {\it $f$-section} if it is determined by a unit vector $X$, normal to the
structure vector fields and $fX$. The sectional curvature of $\pi$ is called an {\it $f$-sectional curvature}. An $S$-manifold (resp., a
$C$-manifold) is said to be an {\it $S$-space-form} (resp., a {\it $C$-space-form}) if it has constant $f$-sectional curvature (see \cite{B,KT} for
more details).

Next, let $M$ be a isometrically immersed submanifold of a metric $f$-manifold $\w M$. For any vector field $X$ tangent to $M$ we write
\begin{equation}\label{TN}
fX=TX+NX,
\end{equation} where $TX$ and $NX$ are the tangential and normal components of $fX$, respectively. The submanifold $M$ is said to be {\it invariant} if $N$ is identically zero, that is, if $fX$ is tangent to $M$, for any vector field $X$ tangent to $M$. On the other hand, $M$ is said to be an {\it anti-invariant} submanifold if $T$ is identically zero, that is, if $fX$ is normal to $M$, for any $X$ tangent to $M$.

From now on, we suppose that all the structure vector fields are tangent to the submanifold $M$. Then, the distribution on $M$ spanned by the
structure vector fields is denoted by $\mathcal{M}$ and its complementary orthogonal distribution is denoted by $\mathcal{L}$. Consequently, if
$X\in\mathcal{L}$, then $\e_\a(X)=0$, for any $\a=1,\dots,s$ and if $X\in\mathcal{M}$, then $fX=0$.

The submanifold $M$ is said to be a {\it slant} submanifold if, for any $p\in M$ and any $X\in T_pM$, linearly independent on
$(\xi_1)_p,\dots,(\xi_s)_p$, the angle between $fX$ and $T_pM$ is a constant $\t\in[0,\pi/2]$, called the slant angle of $M$ in $\w M$. Note that
this definition generalizes that one given by B.-Y. Chen \cite{C3} for complex geometry and that one given by A. Lotta \cite{L} for contact geometry.
Moreover, invariant and anti-invariant submanifolds are slant submanifolds with slant angle $\t=0$ and $\t=\pi/2$, respectively (for a general view
about slant submanifolds, the survey written by A. Carriazo \cite{Ca} can be consulted). A slant immersion which is not invariant nor anti-invariant
is called a {\it proper} slant immersion. In \cite{CFH2}, it is proved that, a $\t$-slant  submanifold $M$ of a metric $f$-manifold $\w M$ satisfies
\begin{equation}\label{gNXNY}
g(NX,NY)=\sin^2\t g(fX,fY),
\end{equation}
for any vector fields $X,Y$ tangent to $M$. Moreover, if we denote by $n+s$ the dimension of $M$, given a local orthonormal basis $\{e_1,\dots
,e_{n+s}\}$ of tangent vector fields to $M$, it is easy to show that
\begin{equation}\label{T2slant}
\s_{j=1}^{n+s}g^2(e_i,fe_j)=cos^2\t(1-\s_{\a=1}^s\eta_{\a}^2(e_i)),
\end{equation}
for any $i=1,\dots,n$.

Concerning the behavior of the second fundamental of a submanifold in a metric $f$-manifold, we know that the study of totally geodesic or totally
umbilical slant submanifolds of $S$-manifolds reduces to the study of invariant submanifolds \cite{CFH2}. It is necessary, then, to use a variation
of these concepts, more related to the structure, namely {\it totally $f$-geodesic} and {\it totally $f$-umbilical} submanifolds, introduced by Ornea
\cite{O}. Thus, a submanifold of a metric $f$-manifold, tangent to the structure vector fields, is said to be a totally $f$-geodesic submanifold
(resp., totally $f$-umbilical) if the distribution $\mathcal{L}$ is totally geodesic (resp., totally umbilical), that is, if $\sg(X,Y)=0$ (resp., if
there exist a normal vector field $V$ such that $\sg(X,Y)=g(X,Y)V$), for any $X,Y\in\mathcal{L}$.

Denoting by $n+s$ (resp. $2m+s$) the dimension of $M$ (resp. $\w M$) and given a local orthonormal basis
$$\{e_1,\dots ,e_n,e_{n+1}=\xi_1,\dots,e_{n+s}=\xi_s,e_{n+s+1},\dots ,e_{2m+s}\}$$
of tangent vector fields to $\w M$, such that $\{e_1,\dots, e_n\}$ is a local orthonormal basis of $\mathcal{L}$, the squared norms of $T$ and $N$
are defined by
\begin{equation}\label{vertTN}
\V T\V ^2=\s_{i,j =1}^ng^2(e_i,Te_j),\mbox{ }\Vert N\Vert^2=\s_{i=1}^n\Vert Ne_1\Vert^2,
\end{equation}
respectively, being independent of the choice of the above orthonormal basis. Moreover, we put $\sg_{ij}^r=g(\sg(e_i,e_j),e_r)$, for any $i,j=1,\dots
,n+s$ and $r=n+s+1,\dots ,2m+s$. Then, the mean curvature vector $H$ and the squared norm of $\sg$ can be written as:
\begin{equation}\label{H}
H=\frac{1}{n+s}\s_{r=n+1}^{2m}\s_{i=1}^{n+s}\sg_{ii}^re_r,
\end{equation}
\begin{equation}\label{vertsg}
\V \sg\V ^2=\s_{r=n+1}^{2m}\left\{\s_{i=1}^{n+s}(\sg_{ii}^r)^2+2\s_{1\leq i< j\leq n+s}(\sg_{ij}^r)^2\right\}.
\end{equation}

\section{Slant submanifolds of generalized $S$-space-forms with two structure vector fields.}\label{riccisect}

The notion of generalized $S$-space-forms was introduced by the authors and A. Carriazo in \cite{CFFu}, considering the case of two structure vector
fields which appeared in the study of hypersurfaces in almost contact manifolds \cite{BL,GY} and which was the first motivation to investigate metric
$f$-manifolds but, in fact, their definition is easily adaptable to any $s>2$. Independently, M. Falcitelly and A.M. Pastore gave a slightly
different definition \cite{FP}. From it, one can deduce that the distribution spanned by the structure vector fields must be flat which is the case,
for instance, of $S$-manifolds and $C$-manifolds. However, in \cite{CF,CFFu} some non-trivial examples of generalized $S$-space-forms with non-flat
distribution spanned by the structure vector fields are provided. Moreover, it is easy to show that both definitions coincide for metric
$f$-manifolds such that either $\w\nabla\xi_\alpha=f$ or $\nabla\xi_\alpha=0$, for any $\alpha=1,\dots,s$. Thus and for the purpose of this paper, we
shall use the definition of \cite{CFFu}.

Consequently, from now on, we consider a ($2m+2)$-dimensional metric $f$-manifold $(\w M,f,\xi_1,\xi_2,\eta_1,\eta_2,g)$ with two structure vector
fields. Then, $\w M$ is said to be a {\it generalized $S$-space-form} \cite{CFFu,T} if there exists seven differentiable functions $F_1,F_2,F_3$ and
$F_{11},F_{12},F_{21},F_{22}$ on $\w M$ such that the curvature tensor field of $\w M$ is given by
\begin{equation}\label{gssf}
\w R=\sum_{i=1}^3 F_i\w R_i+\sum_{i,j=1}^2F_{ij}\w R_{ij},
\end{equation}
where
$$\begin{array}{lll}
\w R_1(X,Y)Z & = & g(Y,Z)X-g(X,Z)Y,\\
\w R_2(X,Y)Z & = & g(X,fZ)fY-g(Y,fZ)fX+2g(X,fY)fZ,\\
\w R_3(X,Y)Z & = &\eta_1(X)\eta_2(Y)\eta_2(Z)\xi_1-\eta_2(X)\eta_1(Y)\eta_2(Z)\xi_1\\
& &+\eta_2(X)\eta_1(Y)\eta_1(Z)\xi_2-\eta_1(X)\eta_2(Y)\eta_1(Z)\xi_2,\\
\w R_{ij}(X,Y)Z & = & \eta_i(X)\eta_j(Z)Y-\eta_i(Y)\eta_j(Z)X\\
& & +g(X,Z)\eta_i(Y)\xi_j-g(Y,Z)\eta_i(X)\xi_j,\mbox{ }i,j=1,2,
\end{array}$$
for any $X,Y,Z$ tangent to $M$. Some examples of generalized $S$-space-forms are given in \cite{CF,CFFu}. In particular, $S$-space-forms and
$C$-space-forms are generalized $S$-space-forms.

Let $M$ be a submanifold isometrically immersed in $\w M$, tangent to both structure vector fields and suppose that dim$(M)=n+2$. As above, let us
consider a local orthonormal basis
\begin{equation}\label{basis}
\{e_1,\dots ,e_n,e_{n+1}=\xi_1,e_{n+2}=\xi_2,e_{n+3},\dots ,e_{2m+2}\}
\end{equation}
of tangent vector fields to $\w M$, such that $\{e_1,\dots, e_n\}$ is a local orthonormal basis of $\mathcal{L}$. The scalar curvature $\tau$ of $M$
is defined by
\begin{equation}\label{tau}
\tau=\frac{1}{2}\s_{i\neq j}K(e_i\wedge e_j),
\end{equation}
where $K$ denotes the sectional curvature of $M$. From (\ref{ge}), (\ref{H})-(\ref{vertsg}) and (\ref{gssf}),we obtain the following relation between
the scalar curvature and the mean curvature of $M$:
\begin{equation}\label{taugssf}
\begin{split}
2\tau=&(n+1)(n+2)F_1-2(n+1)(F_{11}+F_{22})+2F_3\\
&+3F_2\V T\V ^2+(n+2)^2 \V H\V ^2-\V \sg\V ^2.
\end{split}
\end{equation}

Now, from (\ref{H}), (\ref{vertsg}) and (\ref{taugssf}), a straightforward computation gives:
\begin{equation}\label{Handtau}
\begin{split}
\tau=&\frac{(n+2)^2}{4}\Vert H\Vert^2+\frac{(n+1)(n+2)}{2}F_1\\
&+\frac{3\Vert T\Vert^2}{2}F_2+F_3-(n+1)(F_{11}+F_{22})\\
&-\s_{r=n+3}^{2m+2}\left\{\s_{1\leq i<j\leq n+2}(\sg_{ij}^r)^2-\frac{1}{4}\s_{i=1}^{n+2}(\sg_{ii}^r)^2+\frac{1}{2}\s_{1\leq i<j\leq
n+2}\sg_{ii}^r\sg_{jj}^r\right\}.
\end{split}
\end{equation}

By using the above formula, we can prove the following general result:
\begin{thrm}\label{thricU}
Let $M$ be an $(n+2)$-dimensional submanifold of a generalized $S$-space-form $\w M$, tangent to both structure vector fields. Then,
\begin{equation}\label{ricU1}
{\rm Ric(U)}\leq\frac{(n+2)^2}{4}\V H\V ^2+(n+1)F_1+3\V TU\V ^2F_2-(F_{11}+F_{22}),
\end{equation}
for any unit vector field $U\in\mathcal{L}$.
\end{thrm}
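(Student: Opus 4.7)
\medskip

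\noindent\textbf{Proof plan.}
The plan is to prove the estimate directly from the Gauss equation applied pair by pair, rather than invoking the global identity (\ref{Handtau}). Fix $p \in M$ and a unit vector $U \in \mathcal{L}_p$. Choose the local orthonormal basis (\ref{basis}) with the additional property that $e_1 = U$ (possible because $U$ is orthogonal to $\xi_1,\xi_2$, so it may be completed to an orthonormal basis of $\mathcal{L}$, then augmented by $\xi_1,\xi_2$). By the definition of Ricci curvature,
\begin{equation*}
{\rm Ric}(U)=\s_{j=2}^{n+2}K(e_1\wedge e_j).
\end{equation*}

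The first step is to compute $\sum_{j=2}^{n+2}\widetilde K(e_1\wedge e_j)$ by plugging $X=Z=e_1$, $Y=e_j$ into the curvature expression (\ref{gssf}) and summing. Since $e_1\in\mathcal{L}$, one has $\e_\a(e_1)=0$, so the $\w R_3$ terms and the summands of $\w R_{ij}$ whose first $\e_i$ is evaluated at $e_1$ vanish; what survives from $\w R_{ij}$ is $-\e_i(e_j)\e_j(e_j)$, which after summing on $j$ contributes $-(F_{11}+F_{22})$. The $\w R_1$ part gives $(n+1)F_1$ trivially, while $\w R_2(e_1,e_j,e_j,e_1)=3g(e_1,fe_j)^2$ because $g(e_j,fe_j)=0$. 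Using that $g(e_1,f\xi_\a)=0$ and $g(e_1,fe_1)=0$, the sum collapses to $3\V TU\V^2$ (this uses the formulas of Section \ref{fman} relating $T$, $N$ and the tangential part of $f$). Hence
\begin{equation*}
\s_{j=2}^{n+2}\widetilde K(e_1\wedge e_j)=(n+1)F_1+3\V TU\V^2 F_2-(F_{11}+F_{22}).
\end{equation*}

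The second step is to handle the extrinsic part. The Gauss equation (\ref{ge}) gives
\begin{equation*}
K(e_1\wedge e_j)=\widetilde K(e_1\wedge e_j)+\s_{r=n+3}^{2m+2}\bigl[\sg_{11}^r\sg_{jj}^r-(\sg_{1j}^r)^2\bigr],
\end{equation*}
so that, after summing on $j$ and dropping the non-positive contribution $-\sum_{r,j}(\sg_{1j}^r)^2\le 0$, it remains to show
\begin{equation*}
\s_{r=n+3}^{2m+2}\s_{j=2}^{n+2}\sg_{11}^r\sg_{jj}^r\le\f{(n+2)^2}{4}\V H\V^2.
\end{equation*}
Writing $\sum_{j=2}^{n+2}\sg_{jj}^r=(n+2)H^r-\sg_{11}^r$ with $H=\sum_r H^r e_r$, the left-hand side equals
\begin{equation*}
\s_r\bigl[(n+2)H^r\sg_{11}^r-(\sg_{11}^r)^2\bigr]=\s_r\left[\f{(n+2)^2(H^r)^2}{4}-\Bigl(\sg_{11}^r-\f{(n+2)H^r}{2}\Bigr)^2\right],
\end{equation*}
which is manifestly bounded above by $\frac{(n+2)^2}{4}\V H\V^2$. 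Combining the two steps yields (\ref{ricU1}).

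The only delicate computation is the evaluation of $\sum_{j=2}^{n+2}\widetilde{K}(e_1\wedge e_j)$: one must keep track of which $\w R_i$- and $\w R_{ij}$-terms vanish when $e_1\in\mathcal{L}$ and, in the $\w R_2$-contribution, recognize the sum $\sum_j g(e_1,fe_j)^2$ as $\V TU\V^2$. The remaining algebraic step (completing the square) is standard in Chen-type arguments and is where the factor $(n+2)^2/4$ originates, together with the equality condition $\sg_{11}^r=\tfrac{(n+2)H^r}{2}$ and $\sg_{1j}^r=0$ for $j\ge 2$, which will be relevant when the authors address the equality case.
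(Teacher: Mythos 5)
Your proof is correct, and it reaches the stated bound by a more direct route than the paper. The paper first establishes the global identity (\ref{Handtau}) relating $\tau$ to $\Vert H\Vert^2$ and $\Vert\sg\Vert^2$, then writes $\tau={\rm Ric}(U)+\s_{2\leq i<j\leq n}K(e_i\wedge e_j)+\s_{i,\a}K(e_i\wedge\xi_\a)+K(\xi_1\wedge\xi_2)$, computes every one of those remaining sectional curvatures from (\ref{gssf}) and the Gauss equation, and subtracts to arrive at the exact identity (\ref{ricU1a}). You instead never leave the $n+1$ planes containing $U$: you sum the Gauss equation over $K(e_1\wedge e_j)$, $j=2,\dots,n+2$, and complete the square in $\sg_{11}^r$ against $(n+2)H^r$. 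Your two discarded quantities, $\s_{r,j}(\sg_{1j}^r)^2$ and $\s_r\bigl(\sg_{11}^r-\f{(n+2)H^r}{2}\bigr)^2$, are exactly the two terms in the paper's remainder in (\ref{ricU1a}) (note $\sg_{11}^r-\s_{i=2}^{n+2}\sg_{ii}^r=2\sg_{11}^r-(n+2)H^r$), so the two arguments are algebraically equivalent and your version also yields the equality conditions needed for Theorem~\ref{ricequal}. What your route buys is economy: it avoids computing $\tau$, $\Vert\sg\Vert^2$ and the sectional curvatures of planes not containing $U$; what the paper's route buys is that (\ref{Handtau}) is reused in Section~\ref{scalarsect}, so the overhead is amortized. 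Your ambient curvature computation is right (the $\w R_3$ terms die because $\e_\a(e_1)=0$ and the outputs $\xi_1,\xi_2$ are orthogonal to $e_1$, and the $\w R_{ij}$ sum gives $-\d_{ij}$ hence $-(F_{11}+F_{22})$), though you should clean up the index clash in ``$-\e_i(e_j)\e_j(e_j)$'', where $j$ is used both as the basis index and as the second structure index.
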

\begin{proof}
We choose a local orthonormal basis of tangent vector fields to $\w M$ as in (\ref{basis}) and such that $e_1=U$. Then, from (\ref{tau}):
\begin{equation}\label{ricandtau}
\tau={\rm Ric(U)}+\s_{2\leq i<j\leq n}K(e_i\wedge e_j)+\s_{i=2}^n\s_{\a=1}^2K(e_i\wedge\xi_{\a})+K(\xi_1\wedge\xi_2).
\end{equation}

Now, by using (\ref{gssf}), we get
\begin{equation*}
\begin{split}
\s_{2\leq i<j\leq n}K(e_i\wedge e_j)=&\frac{(n-1)(n-2)}{2}F_1\\
&+\s_{2\leq i<j\leq n}\left\{3F_2g(e_1,fe_j)^2+\s_{r=n+3}^{2m+2}\left(\sg_{ii}^r\sg_{jj}^r-(\sg_{ij}^r)^2\right)\right\},
\end{split}
\end{equation*}
\begin{equation*}
\begin{split}
\s_{i=2}^n\s_{\a=1}^2K(e_i\wedge\xi_{\a})=&2(n-1)F_1-(n-1)(F_{11}+F_{22})\\
&+\s_{r=n+3}^{2m+2}\s_{i=2}^n\s_{\a=1}^2\left(\sg_{ii}^r\sg_{n+\a n+\a}^r- (\sg_{i n+\a}^r)^2\right)
\end{split}
\end{equation*}
and:
$$K(\xi_1\wedge\xi_2)=F_1+F_3-(F_{11}+F_{22})-\Vert\sg(\xi_1,\xi_2)\Vert^2+g(\sg(\xi_1,\xi_1),\sg(\xi_2,\xi_2)).$$

Then, substituting into (\ref{ricandtau}) and taking into account (\ref{Handtau}), we obtain,
\begin{equation}\label{ricU1a}
{\rm Ric}(U)=\frac{(n+s)^2}{4}\V H\V ^2+(n+1)F_1+3\V TU\V ^2F_2-(F_{11}+F_{22})
\end{equation}
$$-\s_{r=n+3}^{2m+2}\left\{\frac{1}{4}\left(\sg_{11}^r-\s_{i=2}^{n+2}\sg_{ii}^r\right)^2
+\s_{i=2}^{n+2}(\sg_{1i}^r)^2\right\},$$ which completes the proof.
\end{proof}

What about the equality case of (\ref{ricU1})? If the submanifold is minimal, we can prove the following theorem.
\begin{thrm}\label{ricequal}
Let $M$ be a minimal $(n+2)$-dimensional submanifold of a generalized $S$-space-form $\w M$, tangent to both structure vector fields. Then, a unit
vector field $U$ in $\mathcal{L}$ satisfies the equality case of (\ref{ricU1}) if and only if $U$ lies in the relative null space $\mathcal{N}$ of
$M$.
\end{thrm}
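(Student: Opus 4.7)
The plan is to read off the equality condition directly from identity (\ref{ricU1a}) established in the proof of Theorem \ref{thricU}. That identity is actually an equation, not an inequality, so (\ref{ricU1}) is obtained from it by discarding the non-positive term
\[
-\sum_{r=n+3}^{2m+2}\left\{\frac{1}{4}\Bigl(\sg_{11}^r-\s_{i=2}^{n+2}\sg_{ii}^r\Bigr)^2+\s_{i=2}^{n+2}(\sg_{1i}^r)^2\right\}.
\]
Consequently, equality in (\ref{ricU1}) for $U=e_1$ is equivalent to the simultaneous vanishing, for every $r=n+3,\dots,2m+2$, of
\[
\sg_{11}^r-\s_{i=2}^{n+2}\sg_{ii}^r=0\quad\text{and}\quad \sg_{1i}^r=0,\ i=2,\dots,n+2.
\]

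The second step is to combine these conditions with the minimality hypothesis $H\equiv 0$, which by (\ref{H}) reads $\sg_{11}^r+\sum_{i=2}^{n+2}\sg_{ii}^r=0$ for every $r$. Adding this to the first equality condition yields $2\sg_{11}^r=0$, hence $\sg_{11}^r=0$ for every $r$. Together with $\sg_{1i}^r=0$ for $i=2,\dots,n+2$ and for every $r$, this precisely says $\sg(U,e_j)=0$ for every $j=1,\dots,n+2$, i.e.\ $U\in\mathcal{N}$.

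For the converse direction, assume $U\in\mathcal{N}$. Then $\sg(U,e_j)=0$ for all tangent $e_j$, so in particular $\sg_{1j}^r=0$ for every $j$ and every $r$. This immediately gives $\sg_{1i}^r=0$ for $i=2,\dots,n+2$ and $\sg_{11}^r=0$. Using minimality again we get $\sum_{i=2}^{n+2}\sg_{ii}^r=-\sg_{11}^r=0$, so the bracketed expression in (\ref{ricU1a}) vanishes, which means equality holds in (\ref{ricU1}).

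I do not foresee a genuine obstacle: once one recognises that (\ref{ricU1a}) is the sharp form of (\ref{ricU1}), the argument reduces to solving two linear equations per normal direction and combining them with the minimality trace condition. The only point that deserves a clean formulation is choosing the orthonormal frame with $e_1=U$ consistently with the one used in Theorem \ref{thricU} so that the indices in (\ref{ricU1a}) refer to $U$; after that, the equivalence $\sg(U,\cdot)\equiv 0\Longleftrightarrow$ equality is immediate.
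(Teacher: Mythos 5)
Your proposal is correct and follows essentially the same route as the paper: both read the equality condition off the exact identity (\ref{ricU1a}), combine $\sg_{11}^r=\s_{i=2}^{n+2}\sg_{ii}^r$ with the minimality trace relation to conclude $\sg_{11}^r=0$, and reverse the argument for the converse. No gaps.
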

\begin{proof}
If $U\in\mathcal{L}$ is a unit vector field satisfying the equality case of (\ref{ricU1}), then, choosing a local orthonormal basis of tangent vector
fields to $\w M$ as in (\ref{basis}) and such that $e_1=U$, from (\ref{ricU1a}) we get that $\sg_{1n+\a}^r=0$, for any $r=n+3,\dots,2m+2$ and
$\a=1,2$. So, $\sg(U,\xi_\a)=0$, $\a=1,2$. Furthermore,  by using (\ref{ricU1a}) again, we obtain $\sg_{1i}^r=0$, for any $i=2,\dots ,n$,
$r=n+3,\dots,2m+2$ (that is, $\sg(U,e_i)=0$, for any $i=2,\dots,n$) and
$$\sg_{11}^r=\s_{i=2}^{n+2}\sg_{ii}^r,$$
for any $r=n+3,\dots,2m+2$. But, since $H=0$,
$$\sg_{11}^r=-\s_{i=2}^{n+2}\sg_{ii}^r,$$
for any $r=n+3,\dots,2m+2$, thus $\sg_{11}^r=0$ and $\sg(U,U)=0$. Consequently, $U\in\mathcal{N}$.

Conversely, if $U\in\mathcal{N}$, choosing a local orthonormal basis of tangent vector fields to $\w M$ as in (\ref{basis}) with $e_1=U$, we have
that $\sg_{1i}^r=0$, for any $i=1,\dots,+2n$ and $r=n+3,\dots,2m+2$. Again, since $H=0$, we obtain that
$$\s_{i=2}^{n+2}\sg_{ii}^r=0,$$
for any $r=n+3,\dots,2m+2$. Then, from (\ref{ricU1a}) we deduce the equality case of (\ref{ricU1}).
\end{proof}

The following corollary is immediate:
\begin{cor}
Let $M$ be a minimal $(n+2)$-dimensional submanifold of a generalized $S$-space-form $\w M$, tangent to both structure vector fields. If the equality
case of (\ref{ricU1}) holds for all unit vector fields in $\mathcal{L}$, then $M$ is a totally $f$-geodesic manifold.
\end{cor}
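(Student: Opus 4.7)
The plan is to observe that this corollary follows almost immediately from Theorem \ref{ricequal} combined with the definitions of the relative null space $\mathcal{N}$ and of totally $f$-geodesic submanifolds. The hypothesis is that the equality case of (\ref{ricU1}) holds for every unit vector field in $\mathcal{L}$, so I would apply Theorem \ref{ricequal} pointwise to each such $U$ to conclude that every unit $U\in\mathcal{L}$ lies in $\mathcal{N}$.

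Next, I would upgrade this pointwise statement about unit vectors to the full distribution. Fix an arbitrary point $p\in M$ and an arbitrary nonzero $X_p\in\mathcal{L}_p$. Then $U_p=X_p/\|X_p\|$ is a unit vector in $\mathcal{L}_p$, and by the previous step $U_p\in\mathcal{N}_p$, that is, $\sg(U_p,Y)=0$ for every $Y\in T_pM$. By bilinearity of $\sg$ in its arguments, this gives $\sg(X_p,Y)=\|X_p\|\sg(U_p,Y)=0$, so $\mathcal{L}_p\subseteq\mathcal{N}_p$. Since $p$ was arbitrary, $\mathcal{L}\subseteq\mathcal{N}$.

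Finally, I would invoke the definition recalled in Section \ref{fman}: $M$ is totally $f$-geodesic precisely when $\sg(X,Y)=0$ for every $X,Y\in\mathcal{L}$. Since $\mathcal{L}\subseteq\mathcal{N}$, for any such $X,Y$ we have $\sg(X,Y)=0$ (indeed $\sg(X,\cdot)$ vanishes on \emph{all} of $TM$), which is more than enough to conclude that $M$ is totally $f$-geodesic.

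There is essentially no obstacle here; the content of the corollary is really packaged inside Theorem \ref{ricequal}, and the only thing to be careful about is the (trivial) passage from unit vectors to arbitrary vectors in $\mathcal{L}$, which is handled by the homogeneity of $\sg$ in its first argument. No further curvature computation or manipulation of the identity (\ref{ricU1a}) is needed.
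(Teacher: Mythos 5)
Your proposal is correct and matches the paper's intent exactly: the paper states this corollary is ``immediate'' from Theorem \ref{ricequal}, and your argument simply makes explicit the (trivial) steps of applying that theorem to every unit vector field in $\mathcal{L}$, passing to arbitrary vectors by homogeneity of $\sg$, and invoking the definition of totally $f$-geodesic. No issues.
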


Next, if the ambient generalized $S$-space-form $\w M$ is an $S$-manifold, it is known (see Proposition 7 in \cite{FP}) that $\w M$ is an
$S$-space-form. Therefore \cite{CFFu},
\begin{equation}\label{ssf}
F_1=\frac{c+6}{4};\mbox{ }F_2=F_3=\frac{c-2}{4};\mbox{ }F_{11}=F_{22}=\frac{c+2}{4};\mbox{ }F_{12}=F_{21}=-1,
\end{equation}
where $c$ is denoting the constant $f$-sectional curvature. In this case, a better (in the sense of lower) upper bound for $Ric(U)$ than the one
obtained in (\ref{ricU1}) was got in \cite{FH}. In fact and in terms of the functions of (\ref{ssf}), it was proved that:
\begin{equation}\label{ricU2}
{\rm Ric}(U)\leq\frac{(n+2)^2}{4}\Vert H\Vert^2+(n-1)F_1+(3F_1-4)\Vert TU\Vert^2.
\end{equation}

It is easy to show that both upper bounds of (\ref{ricU1}) and (\ref{ricU2}) are equal if and only if $\V NU\V=0$ and their common value is:
$$\frac{(n+2)^2}{4}\V H\V ^2+(n+2)F_1-4.$$

Conditions for the equality case of (\ref{ricU2}) have also been given in \cite{FH}.

Now, we suppose  that the ambient generalized $S$-space-form $\w M$ is a $C$-manifold. Then, from Proposition 8 and Remark 2 in \cite{FP} it is known
that $\w M$ is a $C$-space-form and so \cite{CFFu},
\begin{equation}\label{csf}
F_1=F_2=F_3=F_{11}=F_{22}=\frac{c}{4};\mbox{ }F_{12}=F_{21}=0,
\end{equation}
where $c$ is denoting the constant $f$-sectional curvature. If $M$ is a $(n+2)$-dimensional submanifold of $\w M$, tangent to both structure vector
fields, from (\ref{gwf}), (\ref{nablaX}) and (\ref{TN}) it is easy to show that
\begin{equation}\label{sgxxi}
\sg(X,\xi_\a)=0,
\end{equation}
for any $X$ tangent to $M$ and $\a=1,2$. Then, by using (\ref{csf}), we have that (\ref{ricU1a}) becomes to
\begin{equation}\label{ricUC1}
\begin{split}
{\rm Ric}(U)=&\frac{(n+s)^2}{4}\V H\V ^2+\{(n-1)+3\V TU\V ^2\}F_1\\
&-\s_{r=n+3}^{2m+2}\left\{\frac{1}{4}\left(\sg_{11}^r-\s_{i=2}^{n}\sg_{ii}^r\right)^2 +\s_{i=2}^{n}(\sg_{1i}^r)^2\right\}
\end{split}
\end{equation}
and so,
\begin{equation}\label{ricUC2}
{\rm Ric}(U)\leq\frac{(n+s)^2}{4}\V H\V ^2+\{(n-1)+3\V TU\V ^2\}F_1,
\end{equation}
for any unit vector field $U\in\mathcal{L}$. To study the equality case of the above equation, we prove:
\begin{thrm}\label{ricequalC}
Let $M$ be a $(n+2)$-dimensional submanifold ($n\geq 2$) of a generalized $S$-space-form $\w M$, tangent to both structure vector fields. If $\w M$
is also an $C$-manifold, then the equality case of (\ref{ricUC2}) holds for all unit vector field in $\mathcal{L}$ if and only if either $M$ is a
totally $f$-umbilical submanifold when $n=2$ or $M$ is a totally geodesic submanifold when $n>2$.
\end{thrm}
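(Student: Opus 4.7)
The plan is to exploit the identity (\ref{ricUC1}), which was established as an equality en route to (\ref{ricUC2}). The gap between (\ref{ricUC2}) and (\ref{ricUC1}) is precisely
\[
\s_{r=n+3}^{2m+2}\left\{\f14\bigl(\sg_{11}^r-\s_{i=2}^n\sg_{ii}^r\bigr)^{\!2}+\s_{i=2}^n(\sg_{1i}^r)^2\right\},
\]
which is a sum of squares. Thus the equality case of (\ref{ricUC2}) at a unit $U\in\mathcal{L}$ is equivalent to the two conditions
\[
\sg_{1i}^r=0\quad(i=2,\dots,n)\qquad\text{and}\qquad \sg_{11}^r=\s_{i=2}^n\sg_{ii}^r,
\]
for every $r=n+3,\dots,2m+2$, with respect to any orthonormal basis of (\ref{basis}) with $e_1=U$ and $\{e_1,\dots,e_n\}$ spanning $\mathcal{L}$.

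The plan is to suppose the equality holds for \emph{every} unit $U\in\mathcal{L}$, so the two conditions above must survive under every orthogonal change of basis inside $\mathcal{L}$. The first condition means that $\sg(U,V)=0$ for all orthonormal pairs $U,V\in\mathcal{L}$. Applying this to the orthonormal pair $(U+V)/\sqrt2$, $(U-V)/\sqrt2$ and expanding bilinearly yields $\sg(U,U)=\sg(V,V)$ for any two unit vectors $U,V\in\mathcal{L}$; hence there is a single normal vector $V_0$ with $\sg(U,U)=V_0$ for every unit $U\in\mathcal{L}$. By bilinearity one then obtains $\sg(X,Y)=g(X,Y)V_0$ for all $X,Y\in\mathcal{L}$. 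Combined with (\ref{sgxxi}) (which is available because $\w M$ is a $C$-manifold), this exhibits $M$ as a totally $f$-umbilical submanifold.

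Next I would feed this back into the second condition. Taking $e_1,\dots,e_n$ orthonormal in $\mathcal{L}$, one has $\sg(e_i,e_i)=V_0$ for $i=1,\dots,n$, so $\sg_{11}^r=\sum_{i=2}^n\sg_{ii}^r$ becomes $V_0=(n-1)V_0$, that is, $(n-2)V_0=0$ in every normal direction. If $n=2$ this is vacuous and we recover exactly the totally $f$-umbilical condition. If $n>2$ it forces $V_0=0$, so $\sg$ vanishes on $\mathcal{L}\times\mathcal{L}$; together with (\ref{sgxxi}) this gives $\sg\equiv 0$, i.e.\ $M$ is totally geodesic.

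The converse is a direct verification: if $M$ is totally $f$-umbilical with $\sg(X,Y)=g(X,Y)V$ on $\mathcal{L}\times\mathcal{L}$ (and $V=0$ in the totally geodesic case) while $\sg(X,\xi_\a)=0$, then in any basis as in (\ref{basis}) with $e_1\in\mathcal{L}$ one has $\sg_{1i}^r=0$ for $i\geq 2$, and $\sg_{11}^r-\sum_{i=2}^n\sg_{ii}^r=(2-n)g(V,e_r)$, which vanishes precisely in the two listed cases, so (\ref{ricUC1}) reduces to equality in (\ref{ricUC2}). I expect the only subtle step to be the polarization argument in the second paragraph that upgrades the vanishing of $\sg_{1i}^r$ in every basis to the totally $f$-umbilical conclusion; everything else is bookkeeping with (\ref{ricUC1}) and (\ref{sgxxi}).
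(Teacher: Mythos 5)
Your proposal is correct and follows essentially the same route as the paper: both read off the equality case from the sum-of-squares gap in (\ref{ricUC1}), let the unit vector $e_1$ range over all of $\mathcal{L}$, and then split on $n=2$ versus $n>2$ using the trace condition $\sg_{11}^r=\sum_{i=2}^n\sg_{ii}^r$ together with (\ref{sgxxi}). The only cosmetic difference is that you obtain the equality of the diagonal entries by polarizing the off-diagonal condition, while the paper gets it directly from the trace condition applied to each choice of $e_1$; both are sound.
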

\begin{proof}
If the equality case of (\ref{ricUC2}) is true for any unit vector field $U\in\mathcal{L}$, then, by choosing local orthonormal basis of tangent
vector fields to $\w M$ as in (\ref{basis}) and since $e_1$ can be chosen to be any arbitrary unit vector field in $\mathcal{L}$, from (\ref{ricUC1})
we get
$$\sg_{ii}^r=\sg_{jj}^r=\frac{1}{2}(\sg_{11}^r+\cdots+\sg_{nn}^r),\mbox{ }i,j=1,\dots ,n,$$
$$\sg_{ij}^r=0,\mbox{ }i\neq j,\mbox{ }i,j=1,\dots,n,$$
for any $r=n+3,\dots ,2m$. Thus, we have to consider two cases. Firstly, if $n=2$, we deduce that $\sg_{11}^r=\sg_{22}^r$, for any $r$ and $M$ is a
totally $f$-umbilical submanifold. Secondly, if $n>2$ we obtain that $\sg_{ii}^r=0$, for any $i=1,\dots n$ and $r$ and so, together with
(\ref{sgxxi}), we deduce that $M$ is a totally geodesic submanifold. The converse part is obvious from (\ref{ricUC1}).
\end{proof}

The above results imply the following theorem for slant submanifolds of generalized $S$-space-forms:
\begin{thrm}\label{ricslant}
Let $M$ be an $(n+2)$-dimensional $(n\geq 2)$ $\t$-slant submanifold of a generalized $S$-space-form $\w M$ and $U\in\mathcal{L}$ be any unit vector
field. Then:
\begin{enumerate}
\item[(i)] We have that:
\begin{equation}\label{ricUslant1}
{\rm Ric(U)}\leq\frac{1}{4}(n+2)^2\V H\V ^2+(n+1)F_1+3\cos^2\t F_2-(F_{11}+F_{22}).
\end{equation}
\item[(ii)] If $\w M$ is also an $S$-manifold, we have
\begin{equation*}
{\rm Ric}(U)\leq\frac{(n+2)^2}{4}\Vert H\Vert^2+(n-1)F_1+(3F_1-4)\cos^2\t
\end{equation*}
and the equality holds for all unit vector field in $\mathcal{L}$ if and only if either $M$ is a totally $f$-geodesic submanifold when $n>2$ or $M$
is a totally $f$-umbilical submanifold when $n=2$.
\item[(iii)] If $\w M$ is also a $C$-manifold, we have
\begin{equation*}
{\rm Ric}(U)\leq\frac{(n+s)^2}{4}\V H\V ^2+\{(n-1)+3\cos^2\t\}F_1
\end{equation*}
and the equality holds for all unit vector field in $\mathcal{L}$ if and only if either $M$ is a totally $f$-umbilical submanifold when $n=2$ or $M$
is a totally geodesic submanifold when $n>2$.
\end{enumerate}
\end{thrm}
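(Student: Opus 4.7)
The plan is to obtain parts (i), (ii) and (iii) by specializing three inequalities already at our disposal (Theorem \ref{thricU}, the estimate (\ref{ricU2}) for the $S$-manifold case, and (\ref{ricUC2}) for the $C$-manifold case) to a $\theta$-slant submanifold by means of a single observation: for any unit vector field $U\in\mathcal{L}$, the squared norm $\V TU\V^{2}$ equals $\cos^{2}\theta$. This reduces everything to a direct substitution, while the equality cases will be read off from previously established results.

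The key lemma is the identity $\V TU\V^{2}=\cos^{2}\theta$ for every unit vector field $U\in\mathcal{L}$. To verify it, I would pick a local orthonormal basis of tangent vector fields to $M$ as in (\ref{basis}) with $e_{1}=U$. Since $TU$ is the tangential part of $fU$, we have $g(TU,e_{j})=g(fU,e_{j})=-g(U,fe_{j})$ for every tangent $e_{j}$, hence
\begin{equation*}
\V TU\V^{2}=\s_{j=1}^{n+2}g^{2}(TU,e_{j})=\s_{j=1}^{n+2}g^{2}(U,fe_{j}).
\end{equation*}
Applying (\ref{T2slant}) with $s=2$ and noting that $\eta_{1}(U)=\eta_{2}(U)=0$ because $U\in\mathcal{L}$, we deduce $\V TU\V^{2}=\cos^{2}\theta$.

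With this identity in hand, part (i) is immediate from (\ref{ricU1}). Part (ii) follows from the sharper $S$-manifold inequality (\ref{ricU2}), which was recalled from \cite{FH}; the equality discussion is also taken from \cite{FH} and combined with the equality cases already obtained in Theorem \ref{ricequal}, translated to the slant setting in the same spirit as Theorem \ref{ricequalC}. Part (iii) is obtained in the same way from the inequality (\ref{ricUC2}), and its equality case is exactly Theorem \ref{ricequalC}, which was stated for general submanifolds tangent to both structure vector fields and hence applies in particular to $\theta$-slant ones.

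The only point that requires genuine care is the sharpness claim in (ii) and (iii): one must make sure that the constant value $\V TU\V^{2}=\cos^{2}\theta$ does not destroy the "for all unit $U\in\mathcal{L}$" quantifier used in the previous equality characterizations. This is harmless because slantness makes $\V TU\V^{2}$ independent of the particular $U\in\mathcal{L}$ chosen, so demanding equality simultaneously for every such $U$ is the same condition as in Theorems \ref{ricequal} and \ref{ricequalC}; consequently the corresponding totally $f$-geodesic, totally $f$-umbilical or totally geodesic conclusions carry over verbatim according to whether $n>2$ or $n=2$. The converse implications in (ii) and (iii) are trivial from (\ref{ricU1a}) and (\ref{ricUC1}), since in those cases every off-diagonal and mean-trace term vanishes identically.
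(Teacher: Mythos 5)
Your proposal is correct and follows essentially the same route as the paper: both reduce the theorem to the identity $\Vert TU\Vert^2=\cos^2\t$ for unit $U\in\mathcal{L}$ (obtained from (\ref{T2slant}) and (\ref{vertTN}) with $e_1=U$), then substitute into (\ref{ricU1}), (\ref{ricU2}) and (\ref{ricUC2}) and quote \cite{FH} and Theorem~\ref{ricequalC} for the equality cases. Your extra remark that slantness makes $\Vert TU\Vert^2$ independent of $U$, so the ``for all $U$'' quantifier is preserved, is a sensible detail the paper leaves implicit.
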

\begin{proof} For any unit vector field $U\in\mathcal{L}$, by using a local orthonormal basis of tangent vector fields to $\w M$ as in (\ref{basis}), such that $e_1=U$, we get from (\ref{T2slant}) and (\ref{vertTN}) that $\Vert TU\Vert^2=\cos^2\t$ and so, from (\ref{ricU1}) we have (\ref{ricUslant1}). For the rest of the proof we only have to consider the results of \cite{FH} for $S$-manifolds and Theorem~\ref{ricequalC} for $C$-manifolds.
\end{proof}

\section{The scalar curvature.}\label{scalarsect}
Recently, B.-Y. Chen \cite{C7,C6} introduced, for a Riemannian manifold $\w M$, a well-defined Riemannian invariant $\d_{\w M}$, given by
$$\d_{\w M}(p)=\tau(p)-(\inf K)(p),$$
for any $p\in\w M$, where $\tau$ is the scalar curvature and
$$(\inf K)(p)=\inf\{K(\pi):\text{plane sections }\pi\subset T_p(\w M)\},$$
with $K(\pi)$ denoting the sectional curvature of $\w M$ associated with the plane section $\pi$. Moreover, for submanifolds $M$ in a real-space form
of constant sectional curvature $c$, Chen gave the following basic inequality involving the intrinsic invariant $\d_M$ and the squared mean curvature
of the immersion
$$\d_M\leq\frac{n^2(n-2)}{2(n-1)}\Vert H\Vert^2+\frac{(n+1)(n-2)}{2}c,$$
where $n$ denotes the dimension of $M$. A similar inequality for $S$-space-forms, conditions for the equality case and some applications have been
established in \cite{CFH}. In this section, we want to study the more general case of generalized $S$-space-forms.

Let $\w M$ be a generalized $S$-space-form with two structure vectors $\xi_1,\xi_2$ and $M$ a $(n+2)$-dimensional submanifold of $\w M$, tangent to
both structure vector fields. Let $\pi\subset\mathcal{L}_p$ a plane section at $p\in M$. Then,
\begin{equation}\label{Fpi}
F^2(\pi)=g^2(e_1,fe_2)
\end{equation}
is a real number in $[0,1]$ which is independent on the choice of the orthonormal basis $\{e_1,e_2\}$ of $\pi$. First, we recall an algebraic lemma
from \cite{C4}:
\begin{lem}\label{lemmachen}
Let $a_1,\dots ,a_k,c$ be $k+1$ $(k\geq 2)$ real numbers such that:
$$\left(\s_{i=1}^ka_i\right)^2=(k-1)\left(\s_{i=1}^k{a_i}^2+c\right).$$
Then, $2a_1 a_2\geq c$, with the equality holding if and only if:
$$a_1+a_2=a_3=\cdots=a_k.$$
\end{lem}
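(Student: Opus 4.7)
The plan is to rewrite the hypothesis so as to isolate $c$, and then reduce the desired inequality $2a_1 a_2 \geq c$ to a standard Cauchy--Schwarz estimate on $k-1$ variables. First I would set $S = \sum_{i=1}^{k} a_i$ and $T = \sum_{i=1}^{k} a_i^2$, so that the hypothesis reads $S^2 = (k-1)(T+c)$, giving $c = S^2/(k-1) - T$. The claim $2 a_1 a_2 \geq c$ is then equivalent to
$$(k-1)(2 a_1 a_2 + T) \geq S^2.$$

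Next I would exploit the identity $2 a_1 a_2 + T = (a_1+a_2)^2 + \sum_{i=3}^{k} a_i^2$. Introducing the $k-1$ real numbers $b_1 = a_1 + a_2$ and $b_j = a_{j+1}$ for $j = 2,\dots,k-1$, one checks that $\sum_{j=1}^{k-1} b_j = S$ and $\sum_{j=1}^{k-1} b_j^2 = 2 a_1 a_2 + T$, so the inequality to be proved becomes
$$(k-1)\sum_{j=1}^{k-1} b_j^2 \geq \Bigl(\sum_{j=1}^{k-1} b_j\Bigr)^2,$$
which is precisely the Cauchy--Schwarz inequality applied to $(b_1,\dots,b_{k-1})$ and the all-ones vector of length $k-1$ (equivalently, the QM--AM inequality).

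Finally, for the equality case one uses that Cauchy--Schwarz becomes an equality exactly when $b_1 = b_2 = \cdots = b_{k-1}$, which in the original variables translates into $a_1 + a_2 = a_3 = \cdots = a_k$, as desired. I do not anticipate any genuine obstacle; the only point requiring a bit of care is the bookkeeping in the substitution collapsing the two variables $a_1,a_2$ into a single variable $b_1$, and the corresponding matching of the equality condition back to the original notation.
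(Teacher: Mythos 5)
Your proof is correct and complete: the substitution $b_1=a_1+a_2$, $b_j=a_{j+1}$ reduces the claim exactly to Cauchy--Schwarz on $k-1$ variables, and the equality analysis (including the degenerate case $k=2$, where equality always holds) matches the statement. The paper itself gives no proof --- it merely recalls the lemma from Chen's article \cite{C4} --- and your argument is precisely the standard one used there, so there is nothing to add.
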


Now, we can prove the following theorem.
\begin{thrm}\label{tau-kpith}
Let $M$ be a $(n+2)$-dimensional submanifold of a generalized $S$-space-form $\w M$, tangent to both structure vector fields. Then, for any point
$p\in M$ and any plane section $\pi\subset\mathcal{L}_p$, we have:
\begin{equation}\label{tau-Kpi}
\begin{split}
\tau-K(\pi)\leq\frac{n(n+2)^2}{2(n+1)}\Vert H\Vert^2&+\frac{n(n+3)}{2}F_1+F_3-(n+1)(F_{11}+F_{22})\\
&+3F_2\left(\frac{\Vert T\Vert^2}{2}-F^2(\pi)\right)
\end{split}
\end{equation}

The equality in (\ref{tau-Kpi}) holds at $p\in M$ if and only if there exist orthonormal bases $\{e_1,\dots ,e_{n+2}\}$ and $\{e_{n+3},\dots
,e_{2m+2}\}$ of $T_pM$ and $T_p^{\perp}M$, respectively, such that:
\begin{enumerate}
\item[(i)] $e_{n+j}=(\xi_j)_p$, for $j=1,2$.
\item[(ii)] $\pi$ is spanned by $e_1$ and $e_2$.
\item[(iii)] The shape operators $A_r=A_{e_r}$, $r=n+3,\dots,2m+2$, take the following forms at $p$:
\end{enumerate}
\begin{equation}\label{An+3}
A_{n+3}=\left(\begin{array}{ccccc}
a & b & 0 & 0 & 0 \\
b & c-a & 0 & 0 & 0 \\
0 & 0 & c &  \cdots & 0\\
0 & 0 & \vdots & \ddots & \vdots\\
0 & 0 & 0 & \cdots & c
\end{array} \right),
\end{equation}
\begin{equation}\label{Ar}
A_r=\left(\begin{array}{ccc}
a_r & b_r & 0 \\
b_r & -a_r & 0 \\
0 & 0 & 0
\end{array}\right),\mbox{ }r\geq n+4,
\end{equation}
where $a,b,c,a_r,b_r\in\R$, for any $r=n+4,\dots,2m+2$.
\end{thrm}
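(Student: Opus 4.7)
The plan is to reduce the claimed inequality to a purely extrinsic Chen-type estimate on the shape operators and then dispatch it with Lemma~\ref{lemmachen}.

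I would first fix a local orthonormal frame as in (\ref{basis}), with $e_{n+1}=\xi_1$, $e_{n+2}=\xi_2$ and $\{e_1,e_2\}$ an orthonormal basis of $\pi$. Since $\pi\subset\mathcal{L}_p$ kills every $\eta_\alpha(e_i)$ with $i\in\{1,2\}$, the $\w R_3$ and $\w R_{ab}$ pieces drop out of $\w R(e_1,e_2,e_2,e_1)$, and the Gauss equation (\ref{ge}) together with (\ref{gssf}) gives
$$K(\pi)=F_1+3F_2 F^2(\pi)+\sum_{r=n+3}^{2m+2}\left[\sigma_{11}^r\sigma_{22}^r-(\sigma_{12}^r)^2\right].$$
Subtracting this from the expression (\ref{Handtau}) for $\tau$ and collecting the $F_1,F_2,F_3,F_{ab}$ contributions, a direct manipulation shows that (\ref{tau-Kpi}) is equivalent to the purely extrinsic inequality
$$\Phi:=\frac{(n+2)^2}{n+1}\V H\V^2-\V \sg\V^2-2\sum_r\sigma_{11}^r\sigma_{22}^r+2\sum_r(\sigma_{12}^r)^2\leq 0.$$

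To prove $\Phi\leq 0$, I would apply Lemma~\ref{lemmachen} for each normal index $r\geq n+3$ with $k=n+2$ and $a_i=\sigma_{ii}^r$. The hypothesis of the lemma produces $c_r=\tfrac{1}{n+1}(\sum_i\sigma_{ii}^r)^2-\sum_i(\sigma_{ii}^r)^2$, and the conclusion reads $2\sigma_{11}^r\sigma_{22}^r\geq c_r$. Summing over $r$, using $\sum_r(\sum_i\sigma_{ii}^r)^2=(n+2)^2\V H\V^2$ together with the expansion (\ref{vertsg}) of $\V\sg\V^2$, the $\sum_r\sum_i(\sigma_{ii}^r)^2$ contributions cancel and one arrives at
$$\Phi\leq -2\sum_r\sum_{\substack{1\leq i<j\leq n+2 \\ (i,j)\neq(1,2)}}(\sigma_{ij}^r)^2\leq 0,$$
which proves (\ref{tau-Kpi}).

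For equality at $p$, both inequalities above must be sharp for every $r$. The outer one forces $\sigma_{ij}^r=0$ whenever $i<j$ and $(i,j)\neq(1,2)$; the equality clause of Lemma~\ref{lemmachen} forces, for each $r$,
$$\sigma_{11}^r+\sigma_{22}^r=\sigma_{33}^r=\cdots=\sigma_{n+2,n+2}^r.$$
Choosing $e_{n+3}$ parallel to $H$ makes $\operatorname{tr}A_r=0$ for $r\geq n+4$; combined with the identity above this forces the common value to vanish for those $r$, yielding exactly (\ref{Ar}). For $r=n+3$ the same identity, with $a:=\sigma_{11}^{n+3}$, $b:=\sigma_{12}^{n+3}$ and $c$ the common value of the remaining diagonal entries (equal to $(n+2)\V H\V/(n+1)$ when $H\neq 0$, and $0$ otherwise), yields (\ref{An+3}). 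The converse is a direct substitution into the chain of inequalities. The main obstacle of the argument lies in the algebraic bookkeeping of Step~2: one has to carry out the manipulation carefully enough that the intrinsic parts of $\tau - K(\pi)$ match the right-hand side of (\ref{tau-Kpi}) exactly and the extrinsic residue is precisely $\tfrac{1}{2}\Phi$; once that identity is in hand, the heart of the proof is a routine normal-componentwise application of Chen's algebraic lemma.
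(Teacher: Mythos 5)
Your proposal is correct and follows essentially the same route as the paper: reduce $\tau-K(\pi)$ via the Gauss equation and the scalar-curvature identity to a purely extrinsic estimate, and settle it with Chen's algebraic Lemma~\ref{lemmachen} applied to the diagonal entries of the shape operators, with the same equality analysis. The only (harmless) difference is bookkeeping: you invoke the lemma once per normal direction with the canonical $c_r$, whereas the paper applies it a single time to the $H$-direction $e_{n+3}$ (absorbing the remaining terms into $c$) and handles $r\geq n+4$ by completing squares; both yield identical equality conditions and hence the forms \eqref{An+3}--\eqref{Ar}.
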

\begin{proof}
Let $\pi\subset\mathcal{L}_p$ be a plane section and choose orthonormal bases $\{e_1,\dots ,e_{n+2}\}$ of $T_pM$ and $\{e_{n+2},\dots ,e_{2m+2}\}$ of
$T_p^{\perp}M$ such that $e_{n+j}=(\xi_j)_p$, for $j=1,2$, $\pi$ is spanned by $e_1, e_2$ and $e_{n+3}$ is in the direction of the mean curvature
vector $H$. Then, from (\ref{gssf})
\begin{equation}\label{Kpi}
\begin{split}
K(\pi)=&\sg_{11}^{n+3}\sg_{22}^{n+3}-(\sg_{12}^{n+3})^2\\
&+\s_{r=n+4}^{2m+2}(\sg_{11}^{r}\sg_{22}^{r}-(\sg_{12}^{r})^2)+F_1+3F_2F^2(\pi).
\end{split}
\end{equation}

Now, put:
\begin{equation}\label{varepsilon}
\begin{split}
\varepsilon=&2\tau-\frac{n(n+2)^2}{n+1}|H|^2-n(n+3)F_1\\
&+2(n+1)(F_{11}+F_{22})-3F_2\Vert T\Vert^2-2F_3.
\end{split}
\end{equation}

Hence, (\ref{taugssf}) and (\ref{varepsilon}) imply:
\begin{equation*}
(n+2)^2\Vert H\Vert^2=(n+1)\{\Vert\sg\Vert^2+\varepsilon-2F_1\}
\end{equation*}
that is, respect to the above orthonormal bases:
\begin{equation*}
\begin{split}
\left(\s_{i=1}^{n+2}\sg_{ii}^{n+3}\right)^2=&(n+1)\left\{\s_{i=1}^{n+2}(\sg_{ii}^{n+3})^2+\s_{i\not=j}^{n+2}
(\sg_{ij}^{n+3})^2+\right.\\
&\left.+\s_{r=n+4}^{2m+2}\s_{i,j=1}^{n+2}(\sg_{ij}^{r})^2+\varepsilon-2F_1\right\}.
\end{split}
\end{equation*}

Therefore, by applying Lemma \ref{lemmachen}, we get:
\begin{equation}\label{aplemachen}
2\sg_{11}^{n+3}\sg_{22}^{n+3}\geq\s_{i\not=j}^{n+2}(\sg_{ij}^{n+3})^2+\s_{r=n+4}^{2m+2}\s_{i,j}^{n+2}(\sg_{ij}^{r})^2+ \varepsilon-2F_1.
\end{equation}

Thus, from (\ref{Kpi}) and (\ref{aplemachen}), we obtain:
\begin{equation}\label{kpiineq}
\begin{split}
K(\pi)&\geq\s_{r=n+3}^{2m+2}\s_{j>2}^{n+2}\left\{(\sg_{1j}^{r})^2+(\sg _{2j}^{r})^2\right\}+\frac{1}{2}\s_{i\neq j>2}^{n+2}(\sg_{ij}^{n+3})^2\\
&+\frac{1}{2}\s_{r=n+4}^{2m+2}\s_{i,j>2}^{n+2}(\sg_{ij}^{r})^2+\frac{1}{2}\s_{r=n+4}^{2m+2}(\sg_{11}^{r}+\sg_{22}^r)^2\\
&+\frac{\varepsilon}{2}+3F_2F^2(\pi)\geq\frac{\varepsilon}{2}+3F_2F^2(\pi).
\end{split}
\end{equation}

Consequently, combining (\ref{varepsilon}) and (\ref{kpiineq}), we get (\ref{tau-Kpi}). If the equality in (\ref{tau-Kpi}) holds, then the
inequalities in (\ref{aplemachen}) and (\ref{kpiineq}) become equalities. So, we have:
$$\begin{array}{l}
\sg_{1j}^{r}=\sg_{2j}^{r}=0,\mbox{ }r=n+2,\dots,2m+2,j>2;\\
\\
\sg_{ij}^{n+3}=0,\mbox{ }i\neq j>2;\\
\\
\sg_{ij}^{r}=0,\mbox{ }r=n+4,\dots,2m+2;i,j>2;\\
\\
\sg_{11}^r+\sg_{22}^r=0,\mbox{ }r=n+4,\dots,2m+2;\\
\\
\sg_{11}^{n+3}+\sg_{22}^{n+3}=\sg_{ii}^{n+3},\mbox{ }i=3,\dots,n+2.
\end{array}$$

Thus, with respect to the chosen orthonormal basis $\{e_1,\dots,e_{2m+2}\}$, the shape operators of $M$ take the forms (\ref{An+3}) and (\ref{Ar}).

The converse follows from a direct calculation.
\end{proof}

Now, we consider:
$$({\inf}_\mathcal{L}K)(p)=\inf\{K(\pi):\text{plane sections }\pi\subset\mathcal{L}_p\}.$$

Then, ${\inf}_\mathcal{L}K$ is a well-defined function on $M$. Let $\d_M^\mathcal{L}$ denote the difference between the scalar curvature and
${\inf}_\mathcal{L}K$, that is:
$$\d_M^\mathcal{L}(p)=\tau(p)-({\inf}_\mathcal{L}K)(p).$$
It is clear that $\d_M^{\mathcal L}\leq\d_M$.

It is obvious that if the submanifold $M$ is anti-invariant, then $\Vert T\Vert^2=F^2(\pi)=0$, for any plane section in $\mathcal{L}$. Consequently,
from (\ref{tau-Kpi}) we obtain:
\begin{cor}
Let $M$ be a $(n+2)$-dimensional submanifold of a generalized $S$-space-form $\w M$, tangent to both structure vector fields. If either $F_2=0$ or
$M$ is an anti-invariant submanifold, then we have:
$$\d_M^\mathcal{L}\leq\frac{n(n+2)^2}{2(n+1)}\Vert H\Vert^2+\frac{n(n+3)}{2}F_1-(n+1)(F_{11}+F_{22})+F_3.$$
\end{cor}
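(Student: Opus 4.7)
The corollary is essentially a specialization of Theorem~\ref{tau-kpith}, so the plan is to observe that under either hypothesis the extra term $3F_2(\Vert T\Vert^2/2-F^2(\pi))$ in (\ref{tau-Kpi}) drops out uniformly in $\pi$, after which taking the infimum over plane sections $\pi\subset\mathcal{L}_p$ yields the stated bound.

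First I would handle the two cases separately. If $F_2=0$, then the term $3F_2\bigl(\Vert T\Vert^2/2-F^2(\pi)\bigr)$ in (\ref{tau-Kpi}) is identically zero, so nothing more is needed. If instead $M$ is anti-invariant, then by definition $TX=0$ for every $X$ tangent to $M$, and in particular for the basis vectors $e_1,\dots,e_n$ of $\mathcal{L}$, so $\Vert T\Vert^2=\sum_{i,j=1}^n g^2(e_i,Te_j)=0$ from (\ref{vertTN}). Moreover, for any plane section $\pi\subset\mathcal{L}_p$ with orthonormal basis $\{e_1,e_2\}$, since $fe_2=Te_2+Ne_2=Ne_2$ is normal to $M$ while $e_1$ is tangent, (\ref{Fpi}) gives $F^2(\pi)=g^2(e_1,fe_2)=0$. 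Thus $\Vert T\Vert^2/2-F^2(\pi)=0$ in this case too.

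Having reduced the right-hand side of (\ref{tau-Kpi}) to the $\pi$-independent quantity
\begin{equation*}
\frac{n(n+2)^2}{2(n+1)}\Vert H\Vert^2+\frac{n(n+3)}{2}F_1-(n+1)(F_{11}+F_{22})+F_3,
\end{equation*}
I would apply Theorem~\ref{tau-kpith} simultaneously to all plane sections $\pi\subset\mathcal{L}_p$. Rewriting the inequality as
\begin{equation*}
\tau(p)-K(\pi)\leq\frac{n(n+2)^2}{2(n+1)}\Vert H\Vert^2+\frac{n(n+3)}{2}F_1-(n+1)(F_{11}+F_{22})+F_3
\end{equation*}
and taking the supremum of the left-hand side over such $\pi$ (equivalently, taking the infimum of $K(\pi)$), I obtain
\begin{equation*}
\d_M^{\mathcal L}(p)=\tau(p)-({\inf}_{\mathcal L}K)(p)\leq\frac{n(n+2)^2}{2(n+1)}\Vert H\Vert^2+\frac{n(n+3)}{2}F_1-(n+1)(F_{11}+F_{22})+F_3,
\end{equation*}
which is the desired bound.

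There is no real obstacle here; the only subtle point is to verify that, in the anti-invariant case, both $\Vert T\Vert^2$ and $F^2(\pi)$ vanish (so that their combination is zero and not merely a cancellation that depends on $\pi$), since otherwise the bound one gets after taking the infimum would still depend on $\Vert T\Vert^2$. With that observation in place, the proof reduces to citing Theorem~\ref{tau-kpith} and passing to the infimum.
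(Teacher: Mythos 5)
Your proposal is correct and follows exactly the paper's (implicit) argument: the paper likewise notes that $F_2=0$ or anti-invariance forces the term $3F_2\left(\Vert T\Vert^2/2-F^2(\pi)\right)$ to vanish and then derives the bound from Theorem~\ref{tau-kpith} by passing to ${\inf}_{\mathcal L}K$. Your extra care in checking that $g(e_1,fe_2)=g(e_1,Ne_2)=0$ in the anti-invariant case is a welcome detail the paper leaves unstated.
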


By using Theorem \ref{tau-kpith} we can obtain some general pinching results for $\d_M ^{\mathcal{L}}$ if either $F_2\geq 0$ or $F_2<0$.
\begin{thrm}
Let $M$ be a $(n+2)$-dimensional submanifold of a generalized $S$-space-form $\w M$, tangent to both structure vector fields. If $F_2\geq 0$, then we
have:
\begin{equation}\label{F2geq0}
\delta_M^{\mathcal{L}}\leq\frac{n(n+2)^2}{2(n+1)}\Vert H\Vert^2+\frac{n(n+3)}{2}F_1+F_3-(n+1)(F_{11}+F_{22})+\frac{3n}{2}F_2.
\end{equation}

The equality in (\ref{F2geq0}) holds identically if and only if $n$ is even and $M$ is an invariant submanifold.
\end{thrm}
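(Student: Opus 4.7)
The plan is to derive the inequality as a direct consequence of Theorem~\ref{tau-kpith} combined with two elementary observations: the nonnegativity of $F^2(\pi)$ and a pointwise bound $\Vert T\Vert^2\leq n$ that characterizes invariant submanifolds.

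First, I would rewrite the right-hand side of (\ref{tau-Kpi}) as $\Phi+3F_2\bigl(\tfrac{1}{2}\Vert T\Vert^2-F^2(\pi)\bigr)$, where
$$\Phi=\frac{n(n+2)^2}{2(n+1)}\Vert H\Vert^2+\frac{n(n+3)}{2}F_1+F_3-(n+1)(F_{11}+F_{22})$$
collects the terms independent of $\pi$. Next, pick the adapted basis (\ref{basis}): for each $e_i\in\mathcal{L}$, the compatibility in (\ref{defg}) gives $\Vert fe_i\Vert^2=1$ since $\eta_\alpha(e_i)=0$, and (\ref{TN}) then yields $\Vert Te_i\Vert^2+\Vert Ne_i\Vert^2=1$. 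Summing over $i=1,\dots,n$ and noting $Te_{n+\alpha}=f\xi_\alpha=0$, I obtain $\Vert T\Vert^2=n-\Vert N\Vert^2\leq n$, with equality at $p$ if and only if $Ne_i=0$ for every $i$, that is, $M$ is invariant at $p$.

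Since $F_2\geq 0$ and $F^2(\pi)\geq 0$, discarding the nonpositive term $-3F_2F^2(\pi)$ and using $\Vert T\Vert^2\leq n$ gives, for every plane section $\pi\subset\mathcal{L}_p$,
$$\tau-K(\pi)\leq \Phi+\frac{3F_2\Vert T\Vert^2}{2}\leq \Phi+\frac{3nF_2}{2}.$$
Since $\pi\subset\mathcal{L}_p$ was arbitrary and $\delta_M^{\mathcal{L}}=\sup_{\pi}(\tau-K(\pi))$, taking the supremum over all plane sections in $\mathcal{L}_p$ produces (\ref{F2geq0}).

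For the equality case, if (\ref{F2geq0}) is an identity on $M$, the slack step $\Vert T\Vert^2\leq n$ must be saturated at every point, so $M$ is invariant; then $f|_{\mathcal{L}}$ satisfies $f^2|_{\mathcal{L}}=-\mathrm{id}$ and is therefore an almost complex structure on the fibers of $\mathcal{L}$, forcing $n$ to be even. Conversely, when $M$ is invariant and $n$ is even, $\Vert T\Vert^2\equiv n$ and one can build an orthonormal basis of $\mathcal{L}_p$ by pairing $\{e_{2j-1},fe_{2j-1}=e_{2j}\}$; a plane section spanned by two vectors from different pairs satisfies $F^2(\pi)=0$, and the vanishing of $N$ together with the shape-operator conditions (\ref{An+3})--(\ref{Ar}) (made consistent with the invariance of $M$) yields equality in Theorem~\ref{tau-kpith} for such $\pi$, hence equality in (\ref{F2geq0}). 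The main subtlety, which I would handle last, is verifying that a $\pi\subset\mathcal{L}_p$ realizing simultaneously $F^2(\pi)=0$ and the Chen-type equality of Theorem~\ref{tau-kpith} can indeed be chosen; the dimension parity is exactly what makes this geometric choice possible.
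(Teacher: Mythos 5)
Your proof follows the paper's argument exactly: apply Theorem~\ref{tau-kpith}, discard the nonpositive term $-3F_2F^2(\pi)$, and bound $\Vert T\Vert^2\leq n$, with equality forcing $N\equiv 0$ on $\mathcal{L}$ and hence an even-dimensional invariant distribution (your pointwise identity $\Vert Te_i\Vert^2+\Vert Ne_i\Vert^2=1$ is precisely what justifies the paper's unexplained step ``$\Vert T\Vert^2=n$ iff $M$ is invariant''). The converse subtlety you flag at the end --- exhibiting a plane section in $\mathcal{L}_p$ that simultaneously realizes $F^2(\pi)=0$ and the equality case of Theorem~\ref{tau-kpith} --- is a genuine issue, but the paper's own proof is no more detailed there, asserting only that equality holds if and only if $\Vert T\Vert^2=n$, so your treatment is at least as complete as the original.
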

\begin{proof}
Since $F_2\geq 0$, from (\ref{tau-Kpi}) we deduce
\begin{equation*}
\begin{split}
\delta_M^{\mathcal{L}}&\leq\frac{n(n+2)^2}{2(n+1)}\Vert H\Vert^2+\frac{n(n+3)}{2}F_1\\
&+F_3-(n+1)(F_{11}+F_{22})+3F_2\frac{\Vert T\Vert^2}{2}
\end{split}
\end{equation*}
and, by using that $\Vert T\Vert^2\leq n$, we get (\ref{F2geq0}). Moreover, the equality holds if and only if $\Vert T\Vert^2=n$, that is, if and
only if $M$ is invariant and so, $n$ is even.
\end{proof}
\begin{thrm}
Let $M$ be a $(n+2)$-dimensional submanifold of a generalized $S$-space-form $\w M$, tangent to both structure vector fields. If $F_2<0$, then we
have:
\begin{equation}\label{F2<0}
\delta_M^{\mathcal{L}}\leq\frac{n(n+2)^2}{2(n+1)}\Vert H\Vert^2+\frac{n(n+3)}{2}F_1+F_3-(n+1)(F_{11}+F_{22}).
\end{equation}

The equality in (\ref{F2<0}) holds at a point $p\in M$ if and only if there exists an orthonormal basis $\{e_1,\dots,e_n,(\xi_1)_p,(\xi_2)_p\}$ of
$T_p(M)$ such that the subspace spanned by $e_3,\dots,e_n$ is anti-invariant, that is, $Te_j=0$, for any $j=3,\dots,n$.
\end{thrm}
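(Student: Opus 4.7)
The plan is to invoke Theorem~\ref{tau-kpith} for an arbitrary plane section $\pi\subset\mathcal{L}_{p}$ and combine it with the algebraic observation that $F^{2}(\pi)\le\V T\V^{2}/2$ holds on every such plane. The sign hypothesis $F_{2}<0$ then forces
$3F_{2}(\V T\V^{2}/2-F^{2}(\pi))\le 0$ uniformly in $\pi$, so (\ref{tau-Kpi}) reduces to the right-hand side of (\ref{F2<0}) for every plane section $\pi\subset\mathcal{L}_{p}$. Since $\delta_{M}^{\mathcal{L}}(p)=\sup_{\pi\subset\mathcal{L}_{p}}(\tau-K(\pi))$, passing to the supremum yields (\ref{F2<0}) at once.

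The auxiliary bound is the only substantive ingredient. I would first check from (\ref{defg}) that $f$ is skew with respect to $g$ and that $\e_{\a}(fX)=0$ for every tangent $X$; consequently, for $X\in\mathcal{L}$ the vector $fX$ has no $\xi$-component, so $TX\in\mathcal{L}$, and $g(TX,Y)=g(fX,Y)=-g(X,fY)=-g(X,TY)$ for $X,Y\in\mathcal{L}$. Thus $T$ restricts to a skew-symmetric endomorphism of $\mathcal{L}_{p}$. Choosing an orthonormal basis $\{e_{1},\dots,e_{n}\}$ of $\mathcal{L}_{p}$ whose first two vectors span $\pi$, skew-symmetry together with (\ref{vertTN}) gives
\[
\V T\V^{2}=\s_{i,j=1}^{n}g(e_{i},Te_{j})^{2}=2\s_{1\le i<j\le n}g(e_{i},Te_{j})^{2}\ge 2g(e_{1},Te_{2})^{2}=2F^{2}(\pi),
\]
by (\ref{Fpi}), which is the desired bound.

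The equality case then proceeds in two parallel steps. Sharpness in (\ref{F2<0}) at $p$ forces simultaneously (a) equality in Theorem~\ref{tau-kpith} for the plane $\pi_{0}$ realising $\inf_{\mathcal{L}}K$ (which produces the shape-operator normal forms (\ref{An+3})–(\ref{Ar})) and (b) the saturation $F^{2}(\pi_{0})=\V T\V^{2}/2$. By the displayed identity, (b) is equivalent to $g(e_{i},Te_{j})=0$ for every $1\le i<j\le n$ with $(i,j)\neq(1,2)$; skew-symmetry upgrades this to $g(e_{i},Te_{j})=0$ for every $i\neq j$ as soon as $\max(i,j)\ge 3$, and since $Te_{j}\in\mathcal{L}_{p}$ we conclude $Te_{j}=0$ for $j=3,\dots,n$, i.e.\ $\mathrm{span}\{e_{3},\dots,e_{n}\}$ is anti-invariant. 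The converse is a direct substitution into (\ref{tau-Kpi}) with $\pi$ spanned by $e_{1},e_{2}$. No step should pose a serious obstacle; the only genuine input is the skew-symmetry estimate $F^{2}(\pi)\le\V T\V^{2}/2$, after which everything reduces to reading off conditions from Theorem~\ref{tau-kpith}.
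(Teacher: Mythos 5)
Your proposal is correct and follows essentially the same route as the paper: both apply Theorem~\ref{tau-kpith} and then exploit the nonnegativity of $\frac{\Vert T\Vert^2}{2}-F^2(\pi)$, which the paper writes out explicitly as the sum of squares $\sum_{j=3}^n\left(g^2(e_1,Te_j)+g^2(e_2,Te_j)\right)+\frac{1}{2}\sum_{i,j=3}^ng^2(e_i,Te_j)$ and you obtain from the skew-symmetry of $T$ on $\mathcal{L}$ --- the same identity in different clothing. Your treatment of the equality case (vanishing of these cross terms forcing $Te_j=0$ for $j\geq 3$, together with equality in Theorem~\ref{tau-kpith}) matches, and is in fact slightly more explicit than, the paper's.
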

\begin{proof}
From Theorem \ref{tau-kpith}, we have (\ref{tau-Kpi}) which implies
\begin{equation}\label{F2<01}
\begin{split}
\delta_M^{\mathcal{L}}&\leq\frac{n(n+2)^2}{2(n+1)}\Vert H\Vert^2+\frac{n(n+3)}{2}F_1+F_3-(n+1)(F_{11}+F_{22})\\
&+3F_2\left\{\sum_{j=3}^n\left(g^2(e_1,Te_j)+g^2(e_2,Te_j)\right)+\frac{1}{2}\sum_{i,j=3}^ng^2(e_i,Te_j)\right\}\\
&\leq\frac{n(n+2)^2}{2(n+1)}\Vert H\Vert^2+\frac{n(n+3)}{2}F_1+F_3-(n+1)(F_{11}+F_{22}).
\end{split}
\end{equation}

If the equality in (\ref{F2<0}) holds, then both inequalities in (\ref{F2<01}) become equalities. Thus, we complete the proof.
\end{proof}

Finally, we are going to study inequality (\ref{tau-Kpi}) when $M$ is a slant submanifold. First, we observe that, if $M$ is a $(n+2)$-dimensional
$\t$-slant submanifold of a metric $f$-manifold, then, from (\ref{gNXNY}), (\ref{T2slant}) and (\ref{vertTN}):
\begin{equation}\label{slantvertTN}
\Vert T\Vert^2=n\cos^2\t;\mbox{ }\Vert N\Vert^2=n\sin^2\t.
\end{equation}

Now, by using (\ref{tau-Kpi}) and (\ref{slantvertTN}), we obtain:
\begin{thrm}\label{tau-Kpislantth}
Let $M$ be a $(n+2)$-dimensional $\t$-slant submanifold of a generalized $S$-space-form $\w M$. Then, for any point $p\in M$ and any plane section
$\pi\subset\mathcal{L}_p$, we have:
\begin{equation}\label{tau-Kpislant}
\begin{split}
\tau-K(\pi)&\leq\frac{n(n+2)^2}{2(n+1)}\Vert H\Vert^2+\frac{n(n+3)}{2}F_1+F_3\\
&-(n+1)(F_{11}+F_{22})+3F_2\left(\frac{n}{2}\cos^2\t-F^2(\pi)\right).
\end{split}
\end{equation}
\end{thrm}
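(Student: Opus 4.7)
The plan is to obtain this result as a direct specialization of Theorem~\ref{tau-kpith} to the slant setting, using the identity for $\Vert T\Vert^2$ in (\ref{slantvertTN}). Since (\ref{tau-Kpi}) has already been established for arbitrary submanifolds tangent to both structure vector fields, and a $\theta$-slant submanifold is a particular instance of such a submanifold, nothing new about the second fundamental form or the curvature decomposition needs to be exploited here.

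First, I would invoke Theorem~\ref{tau-kpith} applied at the given point $p\in M$ with the plane section $\pi\subset\mathcal{L}_p$, which yields
\begin{equation*}
\tau-K(\pi)\leq\frac{n(n+2)^2}{2(n+1)}\Vert H\Vert^2+\frac{n(n+3)}{2}F_1+F_3-(n+1)(F_{11}+F_{22})+3F_2\left(\frac{\Vert T\Vert^2}{2}-F^2(\pi)\right).
\end{equation*}
The only ingredient that needs to be adapted is the term $\Vert T\Vert^2/2$, which in the general formulation depends on the submanifold structure.

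Next, I would use the slant hypothesis. By (\ref{slantvertTN}), which itself follows from the identities (\ref{gNXNY}), (\ref{T2slant}) and the definition (\ref{vertTN}), one has $\Vert T\Vert^2 = n\cos^2\theta$. Substituting this identity into the above inequality immediately produces (\ref{tau-Kpislant}).

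There is no genuine obstacle: the theorem is a corollary of Theorem~\ref{tau-kpith} together with the trigonometric normalization of $\Vert T\Vert^2$ afforded by the slant condition. The only point requiring minor care is that $\pi$ lies in $\mathcal{L}_p$, so the hypotheses of Theorem~\ref{tau-kpith} apply verbatim, and that the substitution preserves the direction of the inequality (which it does, because we are only replacing $\Vert T\Vert^2$ by the equal quantity $n\cos^2\theta$, independently of the sign of $F_2$).
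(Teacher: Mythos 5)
Your proposal is correct and coincides with the paper's own (one-line) argument: the paper derives Theorem~\ref{tau-Kpislantth} precisely by combining (\ref{tau-Kpi}) with the identity $\Vert T\Vert^2=n\cos^2\t$ from (\ref{slantvertTN}). Your additional remark that the substitution is an equality, hence independent of the sign of $F_2$, is a valid (if unstated in the paper) observation.
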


It is well known \cite{CFH1} that there are no proper slant submanifolds of metric $f$-manifolds of dimension lower than $2+s$, being $s$ the number
of structure vector fields. Then, for $(2+2)$-dimensional slant submanifolds, we can state the following result:
\begin{cor}\label{corslant}
Let $M$ be a 4-dimensional $\t$-slant submanifold of a generalized $S$-space-form $\w M$. Then, we have:
\begin{equation}\label{4slant1}
\d_{M}^{\mathcal{L}}\leq\frac{16}{3}\Vert H\Vert^2+5F_1-3(F_{11}+F_{22})+F_3.
\end{equation}

Moreover, the equality holds if and only if $M$ is minimal.
\end{cor}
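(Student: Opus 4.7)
The proof will follow by a direct specialization of Theorem \ref{tau-Kpislantth} to a 4-dimensional slant submanifold, for which $n=2$ and the distribution $\mathcal{L}$ has rank $2$ everywhere. Because $\mathcal{L}_p$ is itself the only 2-plane section contained in $\mathcal{L}_p$ at each point $p$, we have $\delta_M^{\mathcal{L}}(p) = \tau(p) - K(\mathcal{L}_p)$, and it suffices to apply the theorem with $\pi = \mathcal{L}_p$.

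The decisive reduction is to check that, in this setup, the $F_2$-term in (\ref{tau-Kpislant}) drops out. Indeed, pick an orthonormal basis $\{e_1,e_2\}$ of $\mathcal{L}_p$. Since $f$ is $g$-skew-symmetric and $e_i\in\mathcal{L}$, the tangential part $T$ is $g$-skew-symmetric on $\mathcal{L}_p$, so $g(e_i,Te_i)=0$ for $i=1,2$; by (\ref{vertTN}) this gives $\Vert T\Vert^2 = 2g^2(e_1,Te_2) = 2F^2(\mathcal{L}_p)$. Comparing with (\ref{slantvertTN}) for $n=2$, namely $\Vert T\Vert^2=2\cos^2\theta$, we conclude that $F^2(\mathcal{L}_p)=\cos^2\theta$, hence
$$\tfrac{n}{2}\cos^2\theta - F^2(\pi) = \cos^2\theta-\cos^2\theta=0.$$
Plugging this vanishing and $n=2$ into (\ref{tau-Kpislant}) delivers exactly the inequality (\ref{4slant1}).

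For the equality case I would invoke the equality characterization already established in Theorem \ref{tau-kpith}: since the $F_2$-term above is identically zero, equality in (\ref{4slant1}) is equivalent to equality in (\ref{tau-Kpi}), which forces the shape operators $A_{n+3}$ and $A_r$ ($r\geq n+4$) to have the forms (\ref{An+3})-(\ref{Ar}) with $n=2$. A short trace computation on these matrices yields
$$\text{tr}(A_{n+3}) = a+(c-a)+c+c = 3c,\qquad \text{tr}(A_r) = 0\quad(r\geq n+4),$$
so that $H = \tfrac{3c}{4}\,e_{n+3}$. Consequently $M$ is minimal precisely when $c=0$, and a direct substitution back into the shape operator data (\ref{An+3})-(\ref{Ar}) shows that taking $c=0$ produces exactly the configuration which realizes equality in (\ref{4slant1}). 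The main obstacle is this last step, namely matching the shape operator form coming from Theorem \ref{tau-Kpith} with the single scalar condition $c=0$ imposed by minimality; once the trace computation is done, however, the biconditional follows cleanly.
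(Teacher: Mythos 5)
Your derivation of the inequality (\ref{4slant1}) is correct and is essentially the paper's argument: for $n=2$ the only plane section contained in $\mathcal{L}_p$ is $\mathcal{L}_p$ itself, so $\delta_M^{\mathcal{L}}=\tau-K(\mathcal{L})$, and the skew-symmetry of $F$ together with (\ref{vertTN}) and (\ref{slantvertTN}) gives $\Vert T\Vert^2=2F^2(\mathcal{L})=2\cos^2\theta$, so the $F_2$-term in (\ref{tau-Kpislant}) drops out and the numerical coefficients specialize to those of (\ref{4slant1}).

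The equality case is where your argument has a genuine gap. You route it through the shape-operator normal forms (\ref{An+3})--(\ref{Ar}) of Theorem \ref{tau-kpith} and correctly compute ${\rm trace}\,A_{n+3}=3c$ and ${\rm trace}\,A_r=0$ for $r\geq n+4$, so that minimality is the single condition $c=0$. But that characterization leaves $c$ as a \emph{free} parameter: according to Theorem \ref{tau-kpith}, any choice of $a,b,c,a_r,b_r$ realizes equality, so a configuration with $c\neq 0$ would attain equality in (\ref{4slant1}) while $H=\frac{3c}{4}e_{n+3}\neq 0$. Nothing in your computation rules this out, so ``equality $\Rightarrow$ minimal'' does not follow. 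The converse is also not established: minimality only says ${\rm trace}\,A_r=0$ for every $r$, and by itself does not force the shape operators into the block forms (\ref{An+3})--(\ref{Ar}), so ``minimal $\Rightarrow$ equality'' is not proved either; your final sentence verifies only that the normal forms with $c=0$ are consistent with equality, which is circular. The paper's proof does not touch the shape operators at all: it establishes the pointwise identity (\ref{4slant3}), namely $\tau-K(\mathcal{L})=5F_1-3(F_{11}+F_{22})+F_3$, by computing the remaining sectional curvatures $K(e_i\wedge\xi_\alpha)$ and $K(\xi_1\wedge\xi_2)$ from (\ref{gssf}) and (\ref{tau}); combined with (\ref{4slant2}) this reduces (\ref{4slant1}) to $0\leq\frac{16}{3}\Vert H\Vert^2$, from which both directions of the equivalence are immediate. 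That identity (or some other mechanism that kills the free parameter $c$) is the missing ingredient in your argument; without it the biconditional does not close.
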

\begin{proof} Since $n=2$, then it is clear that
\begin{equation}\label{4slant2}
\d_{M}^{\mathcal{L}}=\tau-K(\mathcal{L})
\end{equation}
and $F^2(\mathcal{L})=\cos^2\t$. Thus, (\ref{4slant1}) follows directly from (\ref{tau-Kpislant}). On the other hand, by using (\ref{gssf}) and
(\ref{tau}), it easy to show that:
\begin{equation}\label{4slant3}
\tau-K(\mathcal{L})=5F_1-3(F_{11}+F_{22})+F_3.
\end{equation}

Hence, (\ref{4slant2}) and (\ref{4slant3}) imply the condition for the equality case in (\ref{4slant1}).
\end{proof}

This result improves that one obtained for $S$-space-forms in \cite{CFH}

\end{document}